\newcommand{\Z}{\mathbb{Z}}
\newcommand{\G}{\mathcal{G}}
\newcommand{\T}{\mathcal{T}}
\DeclareMathOperator{\SL}{SL}
\newcommand{\s}{\SL(2,\Z)}
\title{
	{Uniqueness conjecture for extended Markov numbers}
}
\author{Matty van Son}
\begin{document}

\maketitle

\theoremstyle{plain}
\newtheorem{theorem}{Theorem}
\newtheorem{corollary}[theorem]{Corollary}
\newtheorem{lemma}[theorem]{Lemma}
\newtheorem{proposition}[theorem]{Proposition}

\theoremstyle{definition}
\newtheorem{definition}[theorem]{Definition}
\newtheorem{example}[theorem]{Example}
\newtheorem{conjecture}[theorem]{Conjecture}

\theoremstyle{remark}
\newtheorem{remark}[theorem]{Remark}

\newcommand{\Q}{\mathbb Q}

\vskip .2 in

\section*{Introduction}

Triples of \textit{regular Markov numbers} are the solutions to the Diophantine equation
\[
x^2+y^2+z^2=3xyz.
\]
These \textit{Markov triples} are the subject of the uniqueness conjecture of regular Markov numbers, introduced by Frobenius in 1913~\cite{frobenius1913}.

\begin{conjecture}[Uniqueness conjecture~(Frobenius~1913)]
	Markov triples are uniquely defined by their largest element.
\end{conjecture}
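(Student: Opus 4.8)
This is the classical Frobenius problem, which remains open, so what follows is a plan together with the place where every known line of attack stalls. First, reduce to an injectivity statement. Fixing two entries of a Markov triple turns the defining equation into a monic quadratic in the third whose two roots sum to $3xy$, so $(x,y,z)\mapsto(x,y,3xy-z)$ permutes solutions; iterating this involution from $(1,1,2)$ produces every triple, arranged into the Markov tree $\T$, a binary tree on which $\max$ strictly increases down each edge. Hence every Markov number is the largest entry of at least one triple, and the conjecture is exactly the assertion that $v\mapsto\max(v)$ is injective on the vertices of $\T$. Indexing those vertices by the reduced fractions $p/q\in(0,1)$ through the Stern--Brocot tree and writing $m_{p/q}$ for the corresponding largest entry, we must show $p/q\mapsto m_{p/q}$ is injective. (Equivalently, by the work of Cohn and Series, the simple closed geodesics of the once-punctured modular torus carry the same index and have length $2\operatorname{arccosh}(3m_{p/q}/2)$, so the conjecture is a rigidity statement for its simple length spectrum.)

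Second, put $m_{p/q}$ in closed form. By Cohn's construction one may take $A=\begin{pmatrix}2&1\\1&1\end{pmatrix}$, $B=\begin{pmatrix}5&2\\2&1\end{pmatrix}$ and obtain $3m_{p/q}=\operatorname{tr}W_{p/q}$, where $W_{p/q}$ is the positive word in $A,B$ dictated by the continued fraction expansion of $p/q$ (equivalently, the Christoffel word of slope $p/q$). Thus injectivity of $p/q\mapsto m_{p/q}$ becomes the statement that two distinct Christoffel words in the free monoid on $\{A,B\}$ never share a trace.

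Third, exploit monotonicity and arithmetic. Prove the Aigner--type monotonicities — $q\mapsto m_{p/q}$ increasing for fixed numerator, $p\mapsto m_{p/q}$ increasing for fixed denominator, and monotonicity along the two principal descending branches — and combine them with Zagier's estimates, which trap $\log m_{p/q}$ between explicit linear functions of $q$. This orders every natural family internally and forces any hypothetical collision $m_{p/q}=m_{p'/q'}$ onto incomparable branches with comparable denominators. In the arithmetically special cases — $m$, $3m-2$, $3m+2$, or $9m^2-4$ a prime power (or twice a prime, and similar) — the associated conic or Pell equation has a unique solution in the relevant range, which settles the conjecture for all such $m$ and recovers the classical prime-power results of Baragar, Button, Schmutz, and Zhang.

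The remaining, and I expect genuinely hard, step is to forbid $\operatorname{tr}W_{p/q}=\operatorname{tr}W_{p'/q'}$ for truly distinct words when no convenient factorisation of $m$ is at hand. The monotonicity facts only linearly order each family, and trace identities are plentiful in the free monoid on two matrices in general (Cayley--Hamilton and the Fricke relations generate many), so nothing structural currently rules such a coincidence out. A proof would require either (i) a new rigidity theorem — that the particular subsemigroup of $\mathrm{SL}_2(\Z)$ generated by $A$ and $B$ admits no nontrivial trace collisions among Christoffel words — or (ii) an effective lower bound $|m_{p/q}-m_{p'/q'}|\ge f(q,q')$ sharp enough that, once the asymptotics and monotonicity of the previous step are imposed, no collision can survive. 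I take (i), the free-monoid trace rigidity, to be the crux: it is precisely the obstruction that has defeated every earlier attempt.
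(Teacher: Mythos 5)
The statement you were asked to prove is the Frobenius uniqueness conjecture of 1913, which is an open problem. The paper does not prove it --- it is stated explicitly as a conjecture, cited to Frobenius, and the paper's actual contribution runs in the opposite direction: it constructs graphs of \emph{general} Markov numbers for which the analogous uniqueness statement provably fails. So there is no ``paper's own proof'' to compare yours against, and no complete proof exists anywhere in the literature.

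Your proposal is, by its own admission, not a proof: you correctly reduce the conjecture to injectivity of $p/q\mapsto m_{p/q}$ on Farey fractions (equivalently, to the absence of trace collisions among Christoffel words in Cohn's matrices $A$ and $B$), you correctly identify the known partial results (monotonicity along branches, Zagier's asymptotics, the prime-power cases of Baragar, Button, et al.), and you correctly flag that the remaining step --- ruling out trace coincidences between incomparable branches when $m$ has no special arithmetic structure --- is precisely the obstruction nobody has overcome. That is the genuine gap, and it is the whole conjecture. The reduction and the partial cases are sound and standard, but nothing in your outline supplies the missing rigidity statement, so the argument does not close. It is worth noting that the paper's counterexamples for the graphs $\T_n$ (where two distinct vertices share the largest element $\breve{K}(S_n(0)^{5j+1}S_n(1))=\breve{K}(S_n(0)S_n(1)^{3j+1})$) show that any eventual proof for the regular case must use something specific to the seed $\big((1,1),(2,2)\big)$: the trace-rigidity you posit in step (i) is false for the analogous subsemigroups generated by the matrices attached to $S_n(0)$ and $S_n(1)$, so a purely formal free-monoid argument cannot succeed.
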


\begin{example}
	Both $(1,5,2)$ and $(5,29,2)$ are Markov triples.
	By the theory of Markov numbers the number 5 appears in infinitely many Markov triples.
	The uniqueness conjecture states that the only Markov triple in which 5 is the largest element is $(1,5,2)$.
\end{example}

This conjecture is well studied, and shows up in many interesting areas.
We refer to the book by Aigner~\cite{aigner2015} for a general reference.
Although the conjecture is not proven, some cases are known, see for example~\cite{baragar1996,button2001}.

In this note we extend the uniqueness conjecture for graphs of general Markov numbers, and show that for certain graphs the extended uniqueness conjecture fails (Theorem~\ref{theorem: uniq break}).
To define these graphs we first show how regular Markov numbers may be obtained from a graph of sequences.

\section{Generalised uniqueness conjecture}
The author and O.~Karpenkov~\cite{karpenkov2018} showed an extension for regular Markov numbers.
We define this extension Subsection~\ref{subsection: general Markov numbers} and introduce the generalised uniqueness conjecture.
We develop the first counterexamples to the conjecture in Subsection~\ref{subsection: counterexamples}.

\subsection{Development of general Markov numbers} \label{subsection: general Markov numbers}
We start with definitions of continued fractions.
\begin{definition}
	Let $\alpha=\big(a_i\big)_{i=1}^n$ and $\beta=\big(b_i\big)_{i=1}^m$ be finite sequences of positive integers.
	The \textit{concatenation} of $\alpha$ and $\beta$ is $\alpha\oplus\beta=(a_1,\ldots,a_n,b_1,\ldots,b_m)$.
	We often shorten the notation $\alpha\oplus\beta$ to $\alpha\beta$.
	
	The \textit{continued fraction expansion of $\alpha$} is
	\[
	a_1+\cfrac{1}{\ddots+\cfrac{1}{a_n}}.
	\] 
	and is denoted by $[a_1;a_2:\ldots:a_n]$.
\end{definition}
Next we define an important notion in the study of Markov numbers and sequences.
\begin{definition}
	For a sequence of positive integers $(a_1,\ldots,a_n)$ let $c$ and $d$ be the integers with $\gcd(c,d)=1$ such that
	\[
	[a_1;a_2:\ldots:a_{n-1}]=\frac{c}{d}.
	\]
	Define the \textit{integer sine} of $\alpha$ to be $c$.
	We use the notation $\breve{K}(\alpha)=c$.
\end{definition}

\begin{remark}
	The term \textit{integer sine} comes from the study of integer geometry.
	We recommend the book on this topic by Karpenkov~\cite{karpenkov2013} for interested readers.
	
	To calculate the integer sine of a sequence one may evaluate the continued fraction, or a polynomial of elements of the sequence called the  \textit{continuant}.
	For an explanation of continuants see the book by Graham, Knuth, and Patashnik~\cite{graham1990}.
\end{remark}

We define a graph structure that is used to study Markov numbers.

\begin{definition}
	Define operations $\mathcal{L}$ and $\mathcal{R}$ on triples of finite sequences of positive integers by
	\[
	\begin{aligned}
	\mathcal{L}(\alpha,\gamma,\beta)&=(\alpha,\alpha\gamma,\gamma),\\
	\mathcal{R}(\alpha,\gamma,\beta)&=(\gamma,\gamma\beta,\beta).
	\end{aligned}
	\]
	Define a binary graph $\G(\alpha,\beta)$ with root $(\alpha,\alpha\beta,\beta)$, where two vertices $v$ and $w$ are connected by an edge $(v,w)$ if 
	\[
	w=\mathcal{L}(v)\quad \mbox{or} \quad w=\mathcal{R}(v).
	\]
\end{definition}

We define operations $\chi$ and $X$ to obtain a triple graph of positive integers from a graph of triple sequences.

\begin{definition}
	Let $\chi$ be the map acting on triples of sequences by
	\[
	\chi(\alpha,\gamma,\beta)=\big(\breve{K}(\alpha),\breve{K}(\gamma),\breve{K}(\beta)\big).
	\] 
	Define a map $X$ taking a triple graph of sequences $\G(\alpha,\beta)$ to a triple graph of integers, where vertices $v$ are mapped to $\chi(v)$, and edges $(v,w)$ are mapped to $\big(\chi(v),\chi(w)\big)$.
	We call the graph 
	\[
	\T\big((1,1),(2,2)\big)=X\Big(\G\big((1,1),(2,2)\big)\Big)
	\]
	the \textit{graph of regular Markov numbers}.
	The triples appearing as vertices in this graph are the solutions to the Diophantine equation
	\[
	x^2+y^2+z^2=3xyz.
	\]
\end{definition}

A more complete treatment of the relation between Markov numbers and triple graphs of positive integers, and also triple graphs of $\s$ matrices and binary quadratic forms, may be found in the papers~\cite{karpenkov2018,vanson2018}.

Let us collect some known results about the graph of regular Markov numbers.

\begin{proposition}
	\begin{itemize}
		\item[]
		\item[\emph{(}i\emph{)}] Every triple at a vertex of the graph $\T\big((1,1),(2,2)\big)$ is a Markov triple.
		\item[\emph{(}ii\emph{)}] The graph contains every possible Markov triple.
		\item[\emph{(}iii\emph{)}] The vertices $v=(a_1,M,a_2)$ with $a_1\leq M$ and $a_2\leq M$, and $w=(b_1,Q,b_2)$ with $b_1\leq Q$ and $b_2\leq Q$ are connected by an edge $(v,w)$ if and only if either
		\[
		w=(a_1,3Ma_1-a_2,M)\quad \mbox{or} \quad w=(M,3Ma_2-a_1,a_2).
		\]
		\item[\emph{(}iv\emph{)}] The Markov graph is a tree (no loops or double edges). 
	\end{itemize}
\end{proposition}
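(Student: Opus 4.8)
The plan is to pass to the matrix picture. For a finite sequence $\alpha=(a_1,\dots,a_n)$ put $M_\alpha=\bigl(\begin{smallmatrix}a_1&1\\1&0\end{smallmatrix}\bigr)\cdots\bigl(\begin{smallmatrix}a_n&1\\1&0\end{smallmatrix}\bigr)$; the standard continuant identities give $M_{\alpha\beta}=M_\alpha M_\beta$ and that the upper-right entry of $M_\alpha$ equals $\breve{K}(\alpha)$. Since $(1,1)$ and $(2,2)$ have even length, every sequence occurring in $\G\bigl((1,1),(2,2)\bigr)$ is a concatenation of the blocks $(1,1)$ and $(2,2)$, so every matrix occurring is a word in $A:=M_{(1,1)}=\bigl(\begin{smallmatrix}2&1\\1&1\end{smallmatrix}\bigr)$ and $B:=M_{(2,2)}=\bigl(\begin{smallmatrix}5&2\\2&1\end{smallmatrix}\bigr)$. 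Thus $\T\bigl((1,1),(2,2)\bigr)$ is the image, under ``take the upper-right entry'', of the tree of triples of matrices in $\SL(2,\Z)$ with root $(A,AB,B)$ and branching rules $(L,C,R)\mapsto(L,LC,C)$ and $(L,C,R)\mapsto(C,CR,R)$. These are the Cohn matrices, and the proposition is essentially the classical Cohn--Markov dictionary; I indicate the steps.

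For (i), I would prove by induction along the tree that every triple $(L,C,R)$ satisfies (a) $C=LR$; (b) $\operatorname{tr}X=3X_{12}$ for $X\in\{L,C,R\}$; (c) $L_{12}R_{21}+L_{21}R_{12}=2L_{11}R_{22}$; and (d) $\operatorname{tr}(LRL^{-1}R^{-1})=-2$. All four hold for $(A,AB,B)$ by direct computation; (a) is preserved trivially, and (d) because $[L,C]=L[L,R]L^{-1}$ and $[C,R]=[L,R]$ when $C=LR$. The content is that (a)--(c) are preserved: a short computation using $\det=1$ and (b) shows that (a), (b), (c) together force $L_{12}C_{21}+L_{21}C_{12}=2L_{11}C_{22}$ and $C_{12}R_{21}+C_{21}R_{12}=2C_{11}R_{22}$ --- these are exactly (c) for the two children --- and these in turn give $\operatorname{tr}(LC)=3(LC)_{12}$ and $\operatorname{tr}(CR)=3(CR)_{12}$, i.e.\ (b) for the two new centres. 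Finally the Fricke identity $\operatorname{tr}(L)^2+\operatorname{tr}(R)^2+\operatorname{tr}(LR)^2-\operatorname{tr}(L)\operatorname{tr}(R)\operatorname{tr}(LR)=\operatorname{tr}(LRL^{-1}R^{-1})+2$ combines with (a) and (d) to give $\operatorname{tr}(L)^2+\operatorname{tr}(C)^2+\operatorname{tr}(R)^2=\operatorname{tr}(L)\operatorname{tr}(C)\operatorname{tr}(R)$; dividing by $9$ and using (b), the integer triple $\chi(L,C,R)$ solves $x^2+y^2+z^2=3xyz$.

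For (iii) and (ii): since $C=LR$, the matrix $LC^{-1}=LR^{-1}L^{-1}$ is conjugate to $R^{-1}$, so Fricke gives $\operatorname{tr}(LC)=\operatorname{tr}(L)\operatorname{tr}(C)-\operatorname{tr}(R)$, and hence under $\chi$ the two branching rules become $(a_1,M,a_2)\mapsto(a_1,3Ma_1-a_2,M)$ and $(a_1,M,a_2)\mapsto(M,3Ma_2-a_1,a_2)$. A one-line induction with these formulas (using $3Ma_1-a_2\ge 2M$) shows every vertex is sorted, $a_1,a_2\le M$. This gives the two possible shapes of an edge leaving a vertex; conversely a triple of either shape is by construction the corresponding child, and since adjacency is symmetric, two sorted vertices are joined exactly when one is obtained from the other by these formulas --- which is (iii). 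For (ii) one invokes Markov's descent: for a Markov triple $(a_1,M,a_2)$ with $M$ the largest entry the Markov equation forces $0<3a_1a_2-M<M$ (the sole exception being $(1,1,1)$), so replacing $M$ by $3a_1a_2-M$ strictly lowers the largest entry and, by (iii), moves one step towards the root; iterating reaches $(1,5,2)$, so every Markov triple other than $(1,1,1)$ and $(1,1,2)$ occurs.

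For (iv): the sequence graph $\G\bigl((1,1),(2,2)\bigr)$ is a tree, since each non-root vertex has a unique parent --- recovered by stripping off the appropriate prefix or suffix, the two cases being mutually exclusive because $\alpha\ne\beta$ throughout --- and the sequence lengths strictly decrease towards the root. To transfer this to $\T$ it is enough to check that every vertex of $\T$ has exactly one neighbour with smaller largest entry: at least one, namely the image of its parent in $\G$; and at most one, for if $(a_1,M,a_2)$ had two such neighbours, reading both edges through (iii) would force $a_1=a_2$, which is impossible since every vertex of $\T$ has largest entry $\ge 5$ while the only Markov triples with $a_1=a_2$ are $(1,1,1)$ and $(1,1,2)$. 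Hence every vertex has a unique backward path to the root, so $\T$ is a tree. The main obstacle throughout is precisely this sort of arithmetic input --- that a non-degenerate sorted Markov triple has its two outer entries strictly below the middle one and distinct from each other, so that the descent is forced --- which holds for the classical equation but, as the paper goes on to show, fails for the extended families; the one other slightly delicate point is finding the auxiliary invariant (c) needed to close the induction in (i).
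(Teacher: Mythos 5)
The paper does not actually prove this proposition: it is stated as a collection of classical facts and the reader is referred to the books of Cusick and Aigner. Your sketch is, in substance, the standard Cohn-matrix proof that those references give, so the difference is not one of method but of how much is written down. The argument is essentially correct. In particular your auxiliary invariant (c) does propagate: writing $L=\bigl(\begin{smallmatrix}p&q\\r&s\end{smallmatrix}\bigr)$, $R=\bigl(\begin{smallmatrix}e&f\\g&h\end{smallmatrix}\bigr)$, the quantity $L_{12}C_{21}+L_{21}C_{12}-2L_{11}C_{22}$ for $C=LR$ reduces, using $qg+rf=2ph$ and $p+s=3q$, $e+h=3f$, to $r\bigl(q(e+h)-f(p+s)\bigr)=0$; and the identity $\operatorname{tr}(LC)=\operatorname{tr}(L)\operatorname{tr}(C)-\operatorname{tr}(R)$ correctly yields the branching formulas in (iii). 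Two points are asserted rather than proved and carry the real arithmetic content: the descent inequality $0<3a_1a_2-M<M$ for non-singular triples (the heart of (ii)), and the fact that the only Markov triples with a repeated entry are $(1,1,1)$ and $(1,1,2)$, which rests on the pairwise coprimality of the entries and is exactly what makes the parent in (iv) unique. Both are classical and short, but a self-contained write-up would need them; you are right to flag that these are precisely the facts whose analogues fail for the extended families studied in the rest of the paper. (You are also slightly more careful than the stated proposition in noting that $(1,1,1)$ and $(1,1,2)$ do not occur in the tree rooted at $(1,5,2)$.)
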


This proposition is a collection of classical results in the theory of regular Markov numbers.
One may find a proof of each statement in the books by Cusick~\cite{cusick1989} or Aigner~\cite{aigner2015}.

One may define triple graphs of integers in the same way as Markov numbers but with different sequences.
In this note we consider the graphs $\T\big((a,a),(b,b)\big)$ where $a$ and $b$ are positive integers and $a<b$.
We call this a \textit{graph of general Markov numbers}.
We have the analogue of the uniqueness conjecture for regular Markov numbers.
\begin{conjecture}[Uniqueness conjecture for general Markov numbers]

	Let $a$ and $b$ be positive integers with $a<b$.
	Then each triple of integers at a vertex of the graph of general Markov numbers $\T\big((a,a),(b,b)\big)$ is uniquely defined by it's largest element.
\end{conjecture}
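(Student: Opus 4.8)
The plan is to mirror the classical route to Frobenius's conjecture, translated into the continuant description of the tree. First I would fix a vertex $v=(\alpha,\gamma,\beta)$ of $\G\big((a,a),(b,b)\big)$ and record that its image under $\chi$ is $\big(\breve{K}(\alpha),\breve{K}(\gamma),\breve{K}(\beta)\big)$. The first task is an \emph{ordering lemma}: along every edge $(v,w)$ the middle sequence is replaced by a strictly longer concatenation ($\alpha\gamma$ under $\mathcal{L}$, or $\gamma\beta$ under $\mathcal{R}$), and since all entries are positive the continuant of such a concatenation strictly dominates the continuant of either factor. Hence $\breve{K}(\gamma)$ strictly increases as one descends the tree, and at every non-root vertex the largest of the three integers is the middle one. (At the root $\big((a,a),(a,a,b,b),(b,b)\big)$ the hypothesis $a<b$ already forces the middle entry to be largest.) This reduces the statement to a single injectivity claim: the map
\[
v\longmapsto \breve{K}(\gamma_v)
\]
sending a vertex to its middle integer is injective on the vertex set of $\T\big((a,a),(b,b)\big)$.

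Next I would factor this map through the middle \emph{sequence}. Because $\G\big((a,a),(b,b)\big)$ is built as a tree and the concatenation rules strictly increase length, distinct vertices carry distinct middle sequences $\gamma_v$, and the family $\{\gamma_v\}$ is exactly the set of Christoffel-type words generated from the seeds $(a,a)$ and $(b,b)$ by $\mathcal{L},\mathcal{R}$. Thus injectivity of $v\mapsto\breve{K}(\gamma_v)$ is equivalent to the assertion that $\breve{K}$ \emph{separates} these middle words, i.e.
\[
\gamma_v\neq\gamma_w\ \Longrightarrow\ \breve{K}(\gamma_v)\neq\breve{K}(\gamma_w).
\]
For the regular seeds $(1,1),(2,2)$ this separation is precisely the uniqueness phenomenon, and the approach is to try to reproduce it for arbitrary $a<b$ by induction on depth: within a fixed depth one would order the vertices by their $\{\mathcal{L},\mathcal{R}\}$-address and attempt to show that $\breve{K}(\gamma_v)$ is strictly monotone in that address, while the ordering lemma already separates values across distinct depths.

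The hard part will be precisely this separation of middle words by their continuants, and I expect the induction to be where the argument must either succeed or stall. Continuants are far from injective as functions of arbitrary sequences — most visibly $\breve{K}(a_1,\dots,a_n)=\breve{K}(a_n,\dots,a_1)$, together with a family of near-collision identities — so any proof must exploit very special structure of the balanced words $\gamma_v$, presumably a palindromicity or "two ends meet in the middle" property analogous to the one that governs the regular case. The genuine obstacle is that, for a general seed pair, the enlarged alphabet produces many more distinct balanced words of comparable length, and there is no evident arithmetic reason forbidding two of them from yielding the same continuant value. Closing this gap — proving that no two distinct middle sequences of the tree share a continuant — is the crux of the conjecture; it is exactly this separation step that resists a uniform argument across all $(a,b)$, and whose failure is what is isolated in Theorem~\ref{theorem: uniq break}.
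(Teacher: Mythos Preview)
The statement you are attempting to prove is a \emph{conjecture}, and the paper does not prove it; on the contrary, Theorem~\ref{theorem: uniq break} and Proposition~\ref{proposition: uniq break} show that it is \emph{false} for infinitely many seed pairs $(a,b)$, namely $(a,b)=(na_n,nb_n)$ with $a_n=n^2+3$ and $b_n=n^4+5n^2+5$. So there is no ``paper's own proof'' to compare against, and your proposal cannot succeed as written because its target is not a theorem.

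That said, your outline is valuable as a diagnosis of \emph{where} the conjecture breaks. Your ordering lemma and the reduction to injectivity of $v\mapsto\breve K(\gamma_v)$ are correct and match the standard picture. The genuine gap is exactly the step you flag as ``the hard part'': the claim that distinct middle words $\gamma_v\ne\gamma_w$ have distinct continuants. This is not merely hard --- it is false. The paper exhibits, for every $n\ge 1$ and every $j\ge 1$, two distinct middle words
\[
\gamma_v=S_n(0)^{5j+1}S_n(1),\qquad \gamma_w=S_n(0)\,S_n(1)^{3j+1},
\]
with $\breve K(\gamma_v)=\breve K(\gamma_w)$. The mechanism is arithmetic rather than combinatorial: both continuants lie in a single linear recurrence sequence $\big(A_n(j)\big)_{j>0}$, and the indices at which the $L$-branch and $R$-branch subsequences land happen to coincide at $30j+10$. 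So the ``monotonicity in the $\{\mathcal L,\mathcal R\}$-address'' you hoped to establish within a depth cannot hold, and no amount of palindromic structure of the words rescues the separation step for these seeds. Your last paragraph already gestures at this, but the conclusion to draw is sharper: the approach does not ``stall'' --- it provably fails, and the correct use of your framework is to locate the obstruction, not to push the induction through.
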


\subsection{First counterexamples to the general uniqueness conjecture} \label{subsection: counterexamples}

We define certain graphs of general Markov numbers for which this conjecture is false. 
\begin{definition}
	For any positive integer $n$ define positive integers $a_n$ and $b_n$ by
	\[
	\begin{aligned}
	a_n&=n^2+3,\\
	b_n&=n^4+5n^2+5.
	\end{aligned}
	\] 
	Note that $\gcd(a_n,b_n)=1$, and that the ratio $b_n/a_n$ is equal to the continued fraction
	\[
	\frac{b_n}{a_n}=\left[ 1+n^2 ; 1 :  2+n^2 \right]
	\]

\end{definition}

\begin{definition}
	Define the sequences $S_n(0)$ and $S_n(1)$ by
	\[
	S_n(0)=(na_n,na_n),\ \ S_n(1)=(nb_n,nb_n).
	\]
	We notate the graphs given with these sequences by $\T_n=X\Big(\G\big(S_n(0),S_n(1)\big)\Big)$.
\end{definition}

\begin{example}
	We show the sequences $S_n(0)$ and $S_n(1)$ for $n=1,\ldots,5$, along with the continued fraction of $b_n/a_n$, in Table~\ref{table: sn}.
	
	\begin{table}
		\centering
		\begin{tabular}{|l|c|c|c|}
			\hline
			$n$ & $S_n(0)$ & $S_n(1)$ & $b_n/a_n$ \\ \hline
			1 & (4,4) & (11,11) & [2; 1: 3] \\
			2 & (14, 14) & (82, 82) & [5; 1: 6] \\
			3 & (36, 36) & (393, 393) & [10; 1: 11] \\
			4 & (76, 76) & (1364, 1364) & [17; 1: 18] \\
			5 & (140, 140) & (3775, 3775) & [26; 1: 27] \\\hline
		\end{tabular}
		\caption{Sequences $S_n(0)$ and $S_n(1)$ for $n=1,\ldots,5$.}
		\label{table: sn}
	\end{table}

\end{example}

We present the main result.
\begin{theorem} \label{theorem: uniq break}
	The uniqueness conjecture for general Markov numbers does not hold for any graph $\T_n$, where $n$ is a positive integer.
	
\end{theorem}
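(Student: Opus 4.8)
The plan is to exhibit, for each $n$, two distinct vertices in the graph $\T_n$ whose triples share the same largest element, thereby violating the uniqueness conjecture directly. The natural candidates lie close to the root. The root of $\G\big(S_n(0),S_n(1)\big)$ is $\big(S_n(0),\,S_n(0)S_n(1),\,S_n(1)\big)$, and its two children under $\mathcal L$ and $\mathcal R$ are $\big(S_n(0),\,S_n(0)S_n(0)S_n(1),\,S_n(0)S_n(1)\big)$ and $\big(S_n(0)S_n(1),\,S_n(0)S_n(1)S_n(1),\,S_n(1)\big)$. Applying $\chi$ and computing the integer sines $\breve K$ of the relevant concatenations via continuants reduces everything to explicit polynomial identities in $n$. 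The key computation is that the middle entry $\breve K\big(S_n(0)S_n(1)\big)$ — which is the shared largest element — admits a closed form; once one has this, one checks that among the small vertices produced by a few applications of $\mathcal L,\mathcal R$ there are two distinct integer triples with that same maximal entry.

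Concretely, I would proceed as follows. First, record the continuant identities: for sequences $\alpha,\beta$ one has $\breve K(\alpha\beta)=\breve K(\alpha)\,\breve K(\beta)+(\text{correction involving the "co-sine" terms})$, the standard matrix-product formula for continuants written out for the $\SL(2,\Z)$ matrices attached to $\alpha$ and $\beta$. Since $S_n(0)=(na_n,na_n)$ and $S_n(1)=(nb_n,nb_n)$ are each length-two constant sequences, their matrices are explicit $2\times2$ integer matrices with entries that are low-degree polynomials in $n$; multiplying a handful of these together is elementary. Second, evaluate $\chi$ at the root and at the vertices reachable by one and two steps, obtaining integer triples whose entries are explicit polynomials in $n$. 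Third, identify the collision: find two of these vertices $v\neq w$ with $\max\chi(v)=\max\chi(w)$. The choice of $a_n=n^2+3$, $b_n=n^4+5n^2+5$ with $b_n/a_n=[1+n^2;1:2+n^2]$ is clearly engineered so that a particular such coincidence holds as a polynomial identity; I would verify it by expanding both sides.

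The main obstacle is bookkeeping rather than conceptual: one must correctly track not just $\breve K$ but the auxiliary continuant (the denominator $d$, equivalently the other matrix entries) through each concatenation, since $\breve K$ of a product depends on the full matrices and not on the $\breve K$ values alone. A secondary point requiring care is confirming that the two colliding vertices are genuinely distinct as triples (not merely mirror images that the graph might identify) and that both actually occur in $\T_n$ — i.e. that the relevant branch of $\G$ is as described and that no degeneration collapses the two sequences; here the coprimality $\gcd(a_n,b_n)=1$ noted in the definition is what guarantees the continued fraction data is non-degenerate. Finally I would exhibit the collision explicitly for small $n$ (say $n=1$, using $S_1(0)=(4,4)$, $S_1(1)=(11,11)$) as a sanity check that the polynomial identity has been set up correctly, and then state the general-$n$ verification as a routine expansion.
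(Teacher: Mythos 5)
Your overall strategy---exhibit two distinct vertices of $\T_n$ sharing the same largest element, reducing everything to continuant/matrix identities in $n$---is the right kind of argument, but the specific plan has a genuine gap: the collision is not where you propose to look for it. You suggest searching among the root and the vertices one or two steps away, and you identify $\breve K\big(S_n(0)S_n(1)\big)$ as ``the shared largest element.'' That quantity is simply the middle entry of the root; in the tree it reappears in the two children only as a \emph{side} entry, so it is the largest element of exactly one triple and yields no violation of uniqueness. No collision of largest elements occurs at depth one or two. The first actual collision in $\T_n$ is between the vertex reached by applying $\mathcal L$ \emph{five} times (middle entry $\breve K\big(S_n(0)^{6}S_n(1)\big)$) and the vertex reached by applying $\mathcal R$ \emph{three} times (middle entry $\breve K\big(S_n(0)S_n(1)^{4}\big)$); for $n=1$ both equal $355318099$, while the side entries $(4,\,19801199)$ versus $(2888956,\,11)$ show the triples are distinct. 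Your proposed search would terminate without finding anything, and the ``routine expansion'' you defer to is not the identity that needs proving.

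The paper locates the collision by a structural observation rather than a near-root search: setting $L_n(j)=\breve K\big(S_n(0)^{j}S_n(1)\big)$ and $R_n(j)=\breve K\big(S_n(0)S_n(1)^{j}\big)$, each satisfies a two-term linear recurrence (with multipliers $(na_n)^2+2$ and $(nb_n)^2+2$ respectively), and both embed as subsequences of a single Fibonacci-like sequence $A_n$ defined by $A_n(j)=nA_n(j-1)+A_n(j-2)$, namely $L_n(j)=A_n(6j+4)$ and $R_n(j)=A_n(10j)$. Matching indices $6j+4=10j'$ gives $L_n(5i+1)=R_n(3i+1)$ for every $i\ge 1$, i.e.\ infinitely many collisions per graph. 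If you only want the theorem (one collision per $n$ suffices), you could in principle verify $\breve K\big(S_n(0)^{6}S_n(1)\big)=\breve K\big(S_n(0)S_n(1)^{4}\big)$ by brute-force multiplication of the corresponding $2\times2$ matrices, but your proposal neither identifies these as the objects to compare nor supplies a reason to expect equality there; that identification is the missing idea.
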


In the graph of Markov numbers $\T_n$ for any $n>0$ there are triples defined
\[
\begin{aligned}
&\Big( \breve{K}\big(S_n(0)\big),\quad \breve{K}\big( S_n(0)^{5j+1} S_n(1) \big) ,\quad \breve{K}\big(S_n(0)^{5j}S_n(1)\big) \Big),\\
&\Big( \breve{K}\big(S_n(0) S_n(1)^{3j}\big),\quad \breve{K}\big( S_n(0) S_n(1)^{3j+1} \big) ,\quad \breve{K}\big(S_n(1)\big) \Big),
\end{aligned}
\]
for all $j\geq 1$.
We show that the largest element of these triples are equal, and hence the uniqueness conjecture for general Markov numbers fails for the graphs $\T_n$.
More specifically we have the following proposition.
\begin{proposition} \label{proposition: uniq break}
	For all positive integers $n$ and $j$ we have that
	\[
	\breve{K} \big( S_n(0)^{5j+1} S_n(1) \big) = \breve{K} \big( S_n(0) S_n(1)^{3j+1} \big).
	\]
\end{proposition}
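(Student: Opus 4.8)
The plan is to recast the claim in terms of $2\times 2$ matrices, isolate a single structural input — a trace identity relating $na_n$ and $nb_n$ — and reduce everything else to an elementary induction on $j$ together with a short continuant computation. The starting point is the standard dictionary between $\breve{K}$ and products of matrices $\left(\begin{smallmatrix} a & 1\\ 1 & 0\end{smallmatrix}\right)$: for a finite sequence $\alpha=(a_1,\dots,a_k)$ one sets $M(\alpha)=\prod_{i=1}^{k}\left(\begin{smallmatrix} a_i & 1\\ 1 & 0\end{smallmatrix}\right)$, which is multiplicative under concatenation, and the $(1,2)$-entry of $M(\alpha)$ is the continuant $K(a_1,\dots,a_{k-1})$, i.e.\ exactly $\breve{K}(\alpha)$. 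Writing $L=\left(\begin{smallmatrix} na_n & 1\\ 1 & 0\end{smallmatrix}\right)$ and $N=\left(\begin{smallmatrix} nb_n & 1\\ 1 & 0\end{smallmatrix}\right)$, so that $M(S_n(0))=L^2$ and $M(S_n(1))=N^2$, the proposition becomes the matrix identity $\bigl(L^{10j+2}N^2\bigr)_{12}=\bigl(L^2N^{6j+2}\bigr)_{12}$ for all $n,j\ge 1$. Note $\det L=\det N=-1$.

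The one nontrivial input is the trace identity $\operatorname{tr}(L^{5})=\operatorname{tr}(N^{3})$. For any $X$ with $\det X=-1$, Cayley--Hamilton gives $X^{2}=(\operatorname{tr}X)X+I$, hence $X^{m}=\phi_{m}(\operatorname{tr}X)X+\phi_{m-1}(\operatorname{tr}X)I$ where $\phi_0=0$, $\phi_1=1$, $\phi_{m+1}(t)=t\phi_m(t)+\phi_{m-1}(t)$, and therefore $\operatorname{tr}(X^{m})=\phi_{m+1}(\operatorname{tr}X)+\phi_{m-1}(\operatorname{tr}X)$. Applied to $L$ and $N$ this turns the trace identity into the polynomial equality $p^{5}+5p^{3}+5p=q^{3}+3q$ with $p=na_n$ and $q=nb_n$. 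Substituting $a_n=n^2+3$, $b_n=n^4+5n^2+5$ and making the change of variable $y=n^2+3$ — so that $p=ny$ and $q=n(y^2-y-1)$ — both sides collapse to $n\bigl(y^{7}-6y^{6}+9y^{5}+5y^{4}-15y^{3}+5y\bigr)$. This is the step I expect to be the main obstacle: it is entirely mechanical, but it is where an arithmetic slip would do the most damage, so I would double-check it by also evaluating both sides at several values of $n$.

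Granting the trace identity, put $\tau=\operatorname{tr}(L^5)=\operatorname{tr}(N^3)$. Since $\det(L^5)=\det(N^3)=-1$ as well, $L^5$ and $N^3$ share the characteristic polynomial $t^{2}-\tau t-1$, so $L^{10}=(L^5)^2$ and $N^{6}=(N^3)^2$ share the characteristic polynomial $t^{2}-(\tau^2+2)t+1$. Hence $(L^{10})^{j}=\psi_j L^{10}-\psi_{j-1}I$ and $(N^{6})^{j}=\psi_j N^{6}-\psi_{j-1}I$ with the \emph{same} sequence $\psi_0=0$, $\psi_1=1$, $\psi_{m+1}=(\tau^2+2)\psi_m-\psi_{m-1}$. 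Writing $L^{10j+2}=(L^{10})^jL^2=\psi_jL^{12}-\psi_{j-1}L^2$ and $N^{6j+2}=(N^6)^jN^2=\psi_jN^{8}-\psi_{j-1}N^2$ and taking $(1,2)$-entries, the difference of the two sides of the desired identity equals $\psi_j\bigl[(L^{12}N^2)_{12}-(L^2N^8)_{12}\bigr]$. So it is enough to settle $j=1$.

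For $j=1$, using $X^2=\tau X+I$ once more for $X=L^5$ and $X=N^3$ gives $L^{12}=\tau L^{7}+L^{2}$ and $N^{8}=\tau N^{5}+N^{2}$, so $(L^{12}N^2)_{12}-(L^2N^8)_{12}=\tau\bigl[(L^{7}N^2)_{12}-(L^2N^{5})_{12}\bigr]$ and the claim reduces to $(L^{7}N^2)_{12}=(L^2N^{5})_{12}$. Evaluating these $(1,2)$-entries through the continuant polynomials $K_m$ (so that $(L^m)_{11}=K_m(p)$, $(L^m)_{12}=K_{m-1}(p)$, and likewise for $N$), this becomes $q\,K_7(p)+K_6(p)=(p^{2}+1)\,K_4(q)+p\,K_3(q)$; using the trace identity in the form $q^{3}=p^{5}+5p^{3}+5p-3q$ to eliminate $q^3$ and $q^4$ on the right, both sides reduce to the polynomial $\bigl(p^{7}+6p^{5}+10p^{3}+4p\bigr)q+p^{6}+5p^{4}+6p^{2}+1$. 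Thus, once the trace identity is in hand, the dependence on $j$ dissolves through Cayley--Hamilton and only this small continuant computation remains.
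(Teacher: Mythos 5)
Your proof is correct, and it takes a genuinely different route from the one in the paper. The paper works entirely at the level of integer sequences: it introduces the three-term recurrences $L_n(j)=l_nL_n(j-1)-L_n(j-2)$ and $R_n(j)=r_nR_n(j-1)-R_n(j-2)$ with $l_n=(na_n)^2+2$, $r_n=(nb_n)^2+2$, proves by two separate inductions (each resting on Maple-computed polynomial identities in $n$) that $R_n(j)=A_n(10j)$ and $L_n(j)=A_n(6j+4)$ for the auxiliary sequence $A_n(j)=nA_n(j-1)+A_n(j-2)$, and then reads off the coincidence at index $30j+10$. You instead pass to the transfer matrices $L$ and $N$, isolate the single structural fact $\operatorname{tr}(L^5)=\operatorname{tr}(N^3)$, and let Cayley--Hamilton collapse the whole family over $j$ onto the one base identity $(L^{7}N^2)_{12}=(L^2N^{5})_{12}$; I have checked the trace identity (both sides equal $n\bigl(y^7-6y^6+9y^5+5y^4-15y^3+5y\bigr)$ with $y=n^2+3$), the cancellation of the $\psi_{j-1}$ terms against the common value $(L^2N^2)_{12}$, and the final continuant computation, and all are correct. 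What your argument buys is a conceptual explanation of \emph{why} the counterexamples exist --- the numbers $na_n=\operatorname{tr}(F^3)$ and $nb_n=\operatorname{tr}(F^5)$ for $F=\left(\begin{smallmatrix} n & 1\\ 1 & 0\end{smallmatrix}\right)$ are rigged so that $L^5$ and $N^3$ share a characteristic polynomial (both traces being two readings of $\operatorname{tr}(F^{15})$), which is exactly the $6$-versus-$10$ step alignment that the paper detects empirically via $A_n$; what the paper's version buys is the explicit embedding of both families into a single Fibonacci-type sequence, which exhibits the common value concretely as $A_n(30j+10)$.
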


\begin{example}
	
	The simplest examples are in the graph $\T_1$, which contains the triples
	\[
	\begin{aligned}
	\Big( \breve{K}(4,4),\ &\breve{K}\big( (4,4)^6 \oplus (11,11) \big) ,\ \breve{K}\big((4,4)^5\oplus (11,11)\big) \Big),\\
	\Big( \breve{K}\big((4,4)\oplus(11,11)^3 \big),\ &\breve{K}\big( (4,4)\oplus(11,11)^4\big) ,\ \breve{K}(11,11) \Big),
	\end{aligned}
	\]
	which, when evaluated, give
	\[
	\begin{aligned}
	(4,\, &355318099,\, 19801199),\\
	(2888956,\, &355318099,\, 11).
	\end{aligned}
	\]

\end{example}

\subsection{Proof of Theorem~\ref{theorem: uniq break}}
Theorem~\ref{theorem: uniq break} follows from Proposition~\ref{proposition: uniq break}.
To prove this proposition we first define sequences of positive integers $\big(L_n(j)\big)_{j>0}$ and $\big(R_n(j)\big)_{j>0}$ containing the values
\[
\breve{K} \big( S_n(0)^{5j+1} S_n(1) \big)\quad\mbox{and}\quad  \breve{K} \big( S_n(0) S_n(1)^{3j+1} \big).
\] 
We show in Lemmas~\ref{lemma: R is a subsequence of A} and~\ref{lemma: L is a subsequence of A} that both $\big(L_n(j)\big)_{j>0}$ and $\big(R_n(j)\big)_{j>0}$ are subsequences of another sequence $\big(A_n(j)\big)_{j>0}$ for every $n>0$.
Then we show that their elements align within $\big(A_n(j)\big)_{j>0}$ in such a way that Proposition~\ref{proposition: uniq break} holds.

\begin{definition}
	Let $n$ be a positive integer and let $a_n=n^2+3$ and $b_n=n^4+5n^2+5$.
	Define 
	\[
	\begin{aligned}
	l_n&=(na_n)^2+2, \quad r_n=(nb_n)^2+2,\\
	L_n(1)&=\breve{K}(na_n,na_n,nb_n,nb_n), \quad L_n(2)=\breve{K}(na_n,na_n,na_n,na_n,nb_n,nb_n),\\
	R_n(1)&=\breve{K}(na_n,na_n,nb_n,nb_n), \quad R_n(2)=\breve{K}(na_n,na_n,nb_n,nb_n,nb_n,nb_n).
	\end{aligned}
	\]
	For $j>2$ define
	\[
	L_n(j)=l_nL_n(j-1)-L_n(j-2) \quad \mbox{and} \quad R_n(j)=r_nR_n(j-1)-R_n(j-2).
	\]
\end{definition}

We relate the sequences $\big(L_n(j)\big)_{j>0}$ and $\big(R_n(j)\big)_{j>0}$ to the numbers in Theorem~\ref{theorem: uniq break}.
\begin{proposition}
	The following statements are equivalent:
	\begin{itemize}
		\item[\emph{(}i\emph{)}] For all positive integers $n$ and $i$ we have that
		\[
		\breve{K} \big( S_n(0)^{5i+1}\oplus S_n(1) \big) = \breve{K} \big( S_n(0)\oplus S_n(1)^{3i+1} \big).
		\]
		\item[\emph{(}ii\emph{)}] For all positive integers $n$ and $i$ we have that
		\[
		L_n(5i+1) = R_n(3i+1).
		\]
	\end{itemize}
\end{proposition}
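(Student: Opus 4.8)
The plan is to prove the two statements equivalent by showing that the sequences $\big(L_n(j)\big)_{j>0}$ and $\big(R_n(j)\big)_{j>0}$, defined by the linear recurrences with characteristic coefficients $l_n=(na_n)^2+2$ and $r_n=(nb_n)^2+2$, compute exactly the integer sines $\breve K\big(S_n(0)^{5j+1}\oplus S_n(1)\big)$ and $\breve K\big(S_n(0)\oplus S_n(1)^{3j+1}\big)$ respectively. Once that identification is in place, (i) and (ii) are literally the same statement with $i=j$, and the equivalence is immediate. So the whole content is: \emph{for every $n>0$ and $j>0$,}
\[
L_n(5j+1)=\breve K\big(S_n(0)^{5j+1}\oplus S_n(1)\big)\quad\text{and}\quad R_n(3j+1)=\breve K\big(S_n(0)\oplus S_n(1)^{3j+1}\big).
\]
Actually I would prove the cleaner statements $L_n(j)=\breve K\big(S_n(0)^{j}\oplus S_n(1)\big)$ and $R_n(j)=\breve K\big(S_n(1)^{j}\oplus S_n(0)\big)$ for all $j\geq 1$ (matching the given base cases $L_n(1)=R_n(1)=\breve K(na_n,na_n,nb_n,nb_n)$, since $\breve K$ is palindrome-insensitive on the reversed initial segment), and then restrict to the relevant indices.

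The key tool is the continuant identity for integer sines under concatenation: for finite positive sequences $\alpha,\beta$ one has
\[
\breve K(\alpha\oplus\beta)=\breve K(\alpha)\,\breve K(\beta^{+})+\breve K(\alpha^{-})\,\breve K(\beta),
\]
or more precisely the $2\times 2$ matrix factorisation $M(\alpha\oplus\beta)=M(\alpha)M(\beta)$ where $M(\gamma)$ is the product of elementary matrices $\begin{pmatrix}a_i&1\\1&0\end{pmatrix}$ over the entries of $\gamma$, together with the fact that $\breve K(\gamma)$ is one of the entries of $M$ of the sequence with its last term deleted. I would set $P=M(S_n(0))$ and $Q=M(S_n(1))$ — each a product of two elementary matrices — and observe that $\mathrm{tr}(P^2)$-type quantities are governed by Cayley--Hamilton: $P$ satisfies $P^2=(\mathrm{tr}\,P)P-(\det P)I$. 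Since $S_n(0)=(na_n,na_n)$ has $M(S_n(0))=\begin{pmatrix}na_n&1\\1&0\end{pmatrix}^2\cdot(\text{correction})$, a short computation gives $\mathrm{tr}$ of the relevant power equal to $(na_n)^2+2=l_n$, which is exactly why the recurrence coefficient in the definition of $L_n(j)$ is $l_n$; similarly $r_n=(nb_n)^2+2$ arises as the trace governing powers of $Q$. Thus the sequence $j\mapsto \breve K\big(S_n(0)^{j}\oplus S_n(1)\big)$ is extracted from the first column of $P^{j}\cdot(\text{tail of }Q)$, and since $P^{j}$ satisfies the two-term recurrence with coefficients $l_n$ and $\det$ (which is $1$ here, as $\det\begin{pmatrix}a&1\\1&0\end{pmatrix}=-1$ and we take an even power), so does $\breve K\big(S_n(0)^{j}\oplus S_n(1)\big)$. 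Matching the two base cases $j=1,2$ pins the sequence down, giving $L_n(j)=\breve K\big(S_n(0)^{j}\oplus S_n(1)\big)$ by induction; the argument for $R_n$ is identical with the roles of $S_n(0)$ and $S_n(1)$ exchanged and using that $\breve K$ is insensitive to reversing the whole word.

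The main obstacle is bookkeeping at the boundary: $\breve K(\gamma)$ is defined via $[a_1;\dots;a_{n-1}]$, i.e. one deletes the \emph{last} entry before reading off the continuant, so the concatenation $S_n(0)^{j}\oplus S_n(1)$ must be handled carefully — the deleted entry always lies in the $S_n(1)$ block, so the $S_n(0)^{j}$ part contributes a clean matrix power $P^{j}$ and only the $S_n(1)$ part needs special treatment, which is what makes the recurrence coefficient $l_n$ (from $P$) rather than something mixing in $Q$. I would verify the $j=1$ and $j=2$ cases by direct continuant computation (using $\breve K(na_n,na_n,nb_n,nb_n)$ and $\breve K((na_n)^4,(nb_n)^2$-pattern$)$ as given in the definition), confirm that both sides satisfy $x_j=l_n x_{j-1}-x_{j-2}$, and conclude by uniqueness of solutions to a second-order linear recurrence given two initial terms. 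The symmetric claim for $R_n$ closes the proof, and then (i) $\Leftrightarrow$ (ii) is just substitution of $j=5i+1$ on the left and $j=3i+1$ on the right.
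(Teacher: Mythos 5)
Your overall strategy is sound and in fact supplies more detail than the paper does: the paper disposes of this proposition in one sentence by citing the recurrence relation for general Markov numbers from~\cite[Theorem~7.15]{karpenkov2018}, and that cited recurrence is exactly what you set out to prove, namely that $j\mapsto\breve{K}\big(S_n(0)^{j}\oplus S_n(1)\big)$ and $j\mapsto\breve{K}\big(S_n(0)\oplus S_n(1)^{j}\big)$ satisfy the two-term recurrences with coefficients $l_n$ and $r_n$ and agree with $L_n$, $R_n$ at $j=1,2$. Your transfer-matrix/Cayley--Hamilton derivation of the $L_n$ half is correct: with $P=M\big(S_n(0)\big)$ a product of two elementary matrices one has $\det P=1$ and $\operatorname{tr}P=(na_n)^2+2=l_n$, so $P^{j}=l_nP^{j-1}-P^{j-2}$, and $\breve{K}\big(S_n(0)^{j}\oplus S_n(1)\big)=\big(P^{j}M(S_n(1))\big)_{12}$ is a fixed linear functional of $P^{j}$; matching the two initial terms against the defining values of $L_n(1),L_n(2)$ pins the sequence down. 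Once both identifications are in place, (i) and (ii) are indeed the same statement under the substitutions $j=5i+1$ and $j=3i+1$.

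The one genuine flaw is your handling of $R_n$. You propose to prove $R_n(j)=\breve{K}\big(S_n(1)^{j}\oplus S_n(0)\big)$ and pass to $\breve{K}\big(S_n(0)\oplus S_n(1)^{j}\big)$ via the claim that $\breve{K}$ is insensitive to reversing the whole word. That claim is false: since $\breve{K}$ deletes the \emph{last} entry before taking the continuant, $\breve{K}(\gamma)=M(\gamma)_{12}$ while $\breve{K}$ of the reversed word is $M(\gamma)_{21}$; for instance $\breve{K}(1,2)=1$ but $\breve{K}(2,1)=2$, and concretely $\breve{K}\big(S_n(1)\oplus S_n(0)\big)=(nb_n)^2(na_n)+na_n+nb_n$ differs from $R_n(1)=\breve{K}\big(S_n(0)\oplus S_n(1)\big)=(na_n)^2(nb_n)+na_n+nb_n$, so your base cases do not match either. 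The repair stays entirely inside your own framework: work with $\breve{K}\big(S_n(0)\oplus S_n(1)^{j}\big)=\big(M(S_n(0))\,Q^{j}\big)_{12}$ directly, where $Q=M\big(S_n(1)\big)$ satisfies $Q^{j}=r_nQ^{j-1}-Q^{j-2}$ by the same Cayley--Hamilton argument ($\det Q=1$, $\operatorname{tr}Q=r_n$), applied now to the right-hand factor; the base cases $j=1,2$ are then exactly the paper's defining values of $R_n(1)$ and $R_n(2)$, and no reversal is needed.
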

The proof of this proposition relies on the recurrence relation for general Markov numbers which may be found in~\cite[Theorem~7.15]{karpenkov2018}.
\begin{example}
	The first $6$ elements of the sequences $\big(L_1(j)\big)_{j>0}$ and $\big(R_1(j)\big)_{j>0}$ are
	\[
	\begin{aligned}
	\big(L_1(j)\big)_{j=1}^{6}&=(191, 3427, 61495, 1103483, 19801199, 355318099),\\
	\big(R_1(j)\big)_{j=1}^{6}&=(191, 23489, 2888956, 355318099, 43701237221, 5374896860084).
	\end{aligned}
	\]
\end{example}

Next we define the sequence $\big(A_n(j)\big)_{j>0}$.
\begin{definition}
	Let $A_n(1)=1$ and $A_n(2)=n(n^2+4)$.
	Then for $j>2$ define
	\[
	A_n(j)=nA_n(j-1)+A_n(j-2).
	\]
\end{definition}

\begin{remark}
	We guessed the structure of this sequence from looking at the case for $n=1$, where $\big(A_1(j)\big)_{j>0}$ is the sequence A022095 in~\cite{oeis}.
\end{remark}

\begin{lemma} \label{lemma: R is a subsequence of A}
	For all positive integers $n$ and $j$ we have that 
	\[
	R_n(j)=A_n(10j).
	\]
\end{lemma}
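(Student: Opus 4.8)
The statement claims $R_n(j)=A_n(10j)$ for all $n,j\geq 1$. Both sequences satisfy linear recurrences, so the natural strategy is to identify a single linear recurrence that both sides obey and then match enough initial terms. First I would observe that $\big(A_n(j)\big)$ satisfies the order-$2$ recurrence with characteristic roots $\mu,\mu^{-1}\cdot(-1)$ (more precisely the roots of $t^2-nt-1=0$), so the decimated sequence $j\mapsto A_n(10j)$ satisfies an order-$2$ recurrence whose characteristic roots are the $10$th powers of those of $A_n$. Concretely, if $\lambda+\lambda^{-1}$ is the relevant trace for $A_n$ (here the roots multiply to $-1$, so one must be slightly careful with signs), then $A_n(10j)$ satisfies
\[
A_n(10(j+1)) = c_n\, A_n(10j) - A_n(10(j-1)),
\]
where $c_n = \lambda^{10}+\lambda^{-10}$ can be written as a Chebyshev-like polynomial in $n$. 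The key arithmetic fact to check is that this $c_n$ equals exactly $r_n=(nb_n)^2+2$, i.e. that $\lambda^{10}+\lambda^{-10} = (nb_n)^2+2$ where $b_n=n^4+5n^2+5$. This is the heart of the lemma: it is the identity that explains why the ``block length'' $10$ and the coefficient $r_n$ are compatible, and it is where the specific polynomial $b_n=n^4+5n^2+5$ must fall out of the computation rather than being imposed.

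So the concrete steps are: (1) write $A_n(j) = \alpha \lambda^j + \beta (-\lambda^{-1})^j$ with $\lambda = (n+\sqrt{n^2+4})/2$, using $A_n(1)=1$ and $A_n(2)=n(n^2+4)$ to solve for $\alpha,\beta$; (2) compute $\lambda^{10}+\lambda^{-10}$ as a polynomial in $n$ via the recurrence $p_{k+1}=np_k - (-1)\,p_{k-1}$ for $p_k=\lambda^k+(-\lambda^{-1})^k$ — actually since the product of roots is $-1$, it is cleanest to set $q_k = \lambda^k+\lambda^{-k}$ where $\lambda\lambda^{-1}=1$ is forced by renormalizing, so I would instead track $P_k=\lambda^{2k}+\lambda^{-2k}$ with $P_{k+1}=(n^2+2)P_k-P_{k-1}$, and verify $P_5 = (nb_n)^2+2 = r_n$; (3) establish that $R_n(j)$ satisfies $R_n(j)=r_n R_n(j-1)-R_n(j-2)$ by definition, so both $\big(R_n(j)\big)$ and $\big(A_n(10j)\big)$ obey the same order-$2$ recurrence with the same coefficient $r_n$; (4) check the two base cases $R_n(1)=A_n(10)$ and $R_n(2)=A_n(20)$ by direct evaluation of the continuant $\breve{K}(na_n,na_n,nb_n,nb_n)$ and the longer one, against the values $A_n(10),A_n(20)$ obtained from the polynomial expressions. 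Then induction closes the argument.

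The main obstacle is step (2) together with step (4): verifying the polynomial identity $\lambda^{10}+\lambda^{-10}=(nb_n)^2+2$ and the base-case identities $R_n(1)=A_n(10)$, $R_n(2)=A_n(20)$. These are finite polynomial computations in $n$, but they are the substantive content — everything else is formal manipulation of linear recurrences. In particular the base case $R_n(1)=\breve{K}(na_n,na_n,nb_n,nb_n)$ requires expanding a continuant of length $4$ in terms of $na_n,nb_n$ and then substituting $a_n=n^2+3$, $b_n=n^4+5n^2+5$, and checking it matches $A_n(10)$; I would compute $A_n(10)$ explicitly from $P_5$ (since $A_n(10)$ is expressible through the $q_k$'s and $\alpha,\beta$) and confirm the match. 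A useful sanity check throughout is the case $n=1$: there $r_1 = 11^2+2=123$, $R_1(1)=191$, $R_1(2)=23489$, and one verifies $123\cdot 191 - (\text{something}) = \dots$ against $A_1(10),A_1(20)$ from the OEIS sequence A022095, which pins down all constants before committing to the general-$n$ algebra.
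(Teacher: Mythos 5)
Your proposal is correct, and its skeleton (verify the two base cases $R_n(1)=A_n(10)$, $R_n(2)=A_n(20)$, then show that the decimated sequence $j\mapsto A_n(10j)$ obeys the same order-two recurrence $u_j=r_nu_{j-1}-u_{j-2}$ as $R_n$, then induct) is exactly the paper's. Where you differ is in how the decimation identity is established. The paper unrolls the recurrence for $A_n$ ten steps twice, producing explicit degree-$9$ and degree-$19$ coefficient polynomials $x_1,x_2,y_1,y_2$ with $A_n(10k-10)=x_1A_n(10k-19)+x_2A_n(10k-20)$ and $A_n(10k)=y_1A_n(10k-19)+y_2A_n(10k-20)$, and then checks by computer algebra that $y_1=r_nx_1$ and $y_2=r_nx_2-1$. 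You instead pass to the Binet form with roots $\lambda,-\lambda^{-1}$ of $t^2-nt-1$, note that the step-$10$ decimation has characteristic polynomial $t^2-(\lambda^{10}+\lambda^{-10})t+1$ (the constant term is $+1$ because the product of the tenth powers of the roots is $(-1)^{10}=1$, which is precisely what makes the $-u_{j-2}$ term come out right), and reduce everything to the single Chebyshev-type identity $\lambda^{10}+\lambda^{-10}=(nb_n)^2+2$, i.e. $P_5=r_n$ in your notation. That identity does hold (e.g. $P_5=n^{10}+10n^8+35n^6+50n^4+25n^2+2=n^2(n^4+5n^2+5)^2+2$), so your route is sound; it is more conceptual, explains where the period $10$ and the coefficient $r_n$ come from, and replaces the verification of two high-degree coefficient identities by one low-degree one, at the cost of the (routine) sign bookkeeping for the negative root. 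The base-case continuant computations are unavoidable in either version.
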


\begin{proof}
	For any $n>0$ we have  $na_n=3n+n^3$, $b=n^5+5n^3+5n$, and also that $r_n=(nb_n)^2+2$. 
	Hence
	\[
	R_n(1)=\breve{K}(na_n,na_n,nb_n,nb_n)=n^{11}+11n^9+44n^7+76n^5+51n^3+8n.
	\]
	By computation we see that $A_n(10)=R_n(1)$.
	Also we see that
	\[
	\begin{aligned}
	R_n(2)
	=&{n}^{21}+21\,{n}^{19}+189\,{n}^{17}+951\,{n}^{15}+2926\,{n}^{13}+5655\,{n}^{11}+\\
	&6787\,{n}^{9}+4818\,{n}^{7}+1827\,{n}^{5}+301\,{n}^{3}+13\,n
	=A_n(20).
	\end{aligned}
	\]
	This serves as a base of induction.
	
	Assume that $A_n(10j)=R_n(j)$ for all $j=1,\ldots,k{-}1$, for some $k>2$.
	Then
	\[
	\begin{aligned}
	R_n(k)&=r_nR_n(k-1)-R_n(k-2)\\
	&=r_nA_n\big(10(k-1)\big)-A_n\big(10(k-2)\big),\\
	&=r_nA_n(10k-10)-A_n(10k-20),
	\end{aligned}
	\]
	with the first equality following definition, and the second equality following the induction hypothesis.
	Note that
	\[
	\begin{aligned}
	A_n(10k-10)&=nA_n(10k-10-1)+A_n(10k-10-2)\\
	&=(n^2+1)A_n(10k-10-2)+A_n(10k-10-3)\\
	&\vdots\\
	&=x_1A_n(10k-10-9)+x_2A_n(10k-10-10),
	\end{aligned}
	\]
	where $x_1$ and $x_2$ are given through direct computation (we used Maple software) by
	\[
	\begin{aligned}
	x_1&={n}^{9}+8\,{n}^{7}+21\,{n}^{5}+20\,{n}^{3}+5\,n,\\
	x_2&={n}^{8}+7\,{n}^{6}+15\,{n}^{4}+10\,{n}^{2}+1.
	\end{aligned}
	\]
	In the same manner we have
	\[
	\begin{aligned}
	A_n(10k)&=nA_n(10k-1)+A_n(10k-2)\\
	&=(n^2+1)A_n(10k-2)+A_n(10k-3)\\
	&\vdots\\
	&=y_1A_n(10k-19)+y_2A_n(10k-20),
	\end{aligned}
	\]
	where $y_1$ and $y_2$ are given through direct computation by
	\[
	\begin{aligned}
	y_1=&{n}^{19}+18\,{n}^{17}+136\,{n}^{15}+560\,{n}^{13}+1365\,{n}^{11}+\\
	&2002
	\,{n}^{9}+1716\,{n}^{7}+792\,{n}^{5}+165\,{n}^{3}+10\,n
	,\\
	y_2=&{n}^{18}+17\,{n}^{16}+120\,{n}^{14}+455\,{n}^{12}+1001\,{n}^{10}+\\
	&1287
	\,{n}^{8}+924\,{n}^{6}+330\,{n}^{4}+45\,{n}^{2}+1
	.
	\end{aligned}
	\]
	Using this information we have that
	\[
	\begin{aligned}
	R_n(k)&=r_nA_n(10k-10)-A_n(10k-20)\\
	&=r_n\big(x_1A_n(10k-10-9)+x_2A_n(10k-10-10)\big)-A_n(10k-20)\\
	&=r_nx_1A_n(10k-19)+(r_nx_2-1)A_n(10k-20).
	\end{aligned}
	\]
	Through direct computation we see that $y_1=r_nx_1$ and $y_2=r_nx_2-1$.
	Hence we have that
	\[
	R_n(k)=A_n(10k),
	\]
	so the induction holds and the proof is complete.
\end{proof}
Lemma~\ref{lemma: R is a subsequence of A} says that $\big(R_n(j)\big)_{j>0}$ is a subsequence of $\big(A_n(j)\big)_{j>0}$.
Now we show an analogous statement for $\big(L_n(j)\big)_{j>0}$.

\begin{lemma} \label{lemma: L is a subsequence of A}
	For all positive integers $n$ and $i$ we have that 
	\[
	L_n(j)=A_n\big(10+6(j-1)\big).
	\]
\end{lemma}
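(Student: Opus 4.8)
The plan is to reduce the lemma to a single transparent computation by identifying the ``step-$6$'' subsequence $j\mapsto A_n(6j+4)$ (note that $10+6(j-1)=6j+4$) as a solution of the same recurrence $u_j=l_nu_{j-1}-u_{j-2}$ that defines $\big(L_n(j)\big)_{j>0}$, and with the same first two terms; two sequences obeying a common second-order linear recurrence and agreeing on their first two terms are equal, so this will finish the proof. The sequence $\big(A_n(m)\big)_{m>0}$ is a linear recurrence sequence with characteristic polynomial $t^{2}-nt-1$; let $\lambda,\mu$ be its roots, so $\lambda+\mu=n$ and $\lambda\mu=-1$. By the standard fact that a subsequence of a linear recurrence sequence taken along an arithmetic progression is again a linear recurrence sequence --- concretely, writing $M_n=\left(\begin{smallmatrix} n & 1\\ 1 & 0\end{smallmatrix}\right)$ one has $\big(A_n(m+6),A_n(m+5)\big)^{T}=M_n^{6}\big(A_n(m),A_n(m-1)\big)^{T}$, and $M_n^{6}$ satisfies its characteristic equation $t^{2}-(\lambda^{6}+\mu^{6})t+(\lambda\mu)^{6}=0$ --- the sequence $j\mapsto A_n(6j+4)$ satisfies
\[
A_n(6j+4)=(\lambda^{6}+\mu^{6})\,A_n(6j-2)-(\lambda\mu)^{6}\,A_n(6j-8)\qquad(j>2).
\]

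Next I would compute the two coefficients. Since $\lambda\mu=-1$ we have $(\lambda\mu)^{6}=1$, and from $\lambda^{2}+\mu^{2}=(\lambda+\mu)^{2}-2\lambda\mu=n^{2}+2$ together with $\lambda^{6}+\mu^{6}=(\lambda^{2}+\mu^{2})^{3}-3(\lambda\mu)^{2}(\lambda^{2}+\mu^{2})$ we get
\[
\lambda^{6}+\mu^{6}=(n^{2}+2)^{3}-3(n^{2}+2)=(n^{2}+2)(n^{4}+4n^{2}+1).
\]
The crucial identity is then
\[
(n^{2}+2)(n^{4}+4n^{2}+1)=n^{6}+6n^{4}+9n^{2}+2=\big(n(n^{2}+3)\big)^{2}+2=(na_n)^{2}+2=l_n,
\]
a one-line expansion. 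Hence $j\mapsto A_n(6j+4)$ satisfies $u_j=l_nu_{j-1}-u_{j-2}$ for $j>2$, precisely the recursion in the definition of $\big(L_n(j)\big)_{j>0}$.

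It remains to match the first two values. For $j=1$ we have $L_n(1)=R_n(1)$ by definition, and the equality $R_n(1)=A_n(10)$ is the first base case already verified inside the proof of Lemma~\ref{lemma: R is a subsequence of A}. For $j=2$ I would compute both $L_n(2)=\breve{K}(na_n,na_n,na_n,na_n,nb_n,nb_n)$ and $A_n(16)$ explicitly as polynomials in $n$ --- using $na_n=n^{3}+3n$ and $nb_n=n^{5}+5n^{3}+5n$, both sides turning out to have degree $17$ --- and compare coefficients, exactly as $R_n(1)=A_n(10)$ and $R_n(2)=A_n(20)$ were checked in the previous lemma. With both initial terms matched and the same recurrence in force, $L_n(j)=A_n(6j+4)=A_n\big(10+6(j-1)\big)$ for all $j>0$.

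The step I expect to be the main obstacle is isolating and confirming the identity $\lambda^{6}+\mu^{6}=l_n$ (equivalently $\operatorname{tr}M_n^{6}=l_n$): though it is a short expansion once written down, it is the only place where the special shape $a_n=n^{2}+3$ is used in an essential way, and it is exactly what makes the block length $6$ the right one here --- the companion statement $R_n(j)=A_n(10j)$ rests on the analogous identity $\lambda^{10}+\mu^{10}=r_n$. If one prefers to remain in the purely computational style of Lemma~\ref{lemma: R is a subsequence of A}, the same conclusion can instead be reached by induction on $j$: assuming $L_n(j)=A_n(6j+4)$ for $j<k$, write $L_n(k)=l_nA_n(6k-2)-A_n(6k-8)$, use the $A_n$-recurrence to expand $A_n(6k-2)=x_1A_n(6k-7)+x_2A_n(6k-8)$ and $A_n(6k+4)=y_1A_n(6k-7)+y_2A_n(6k-8)$ for explicit polynomials $x_1,x_2,y_1,y_2$ in $n$, and verify with a computer algebra system that $y_1=l_nx_1$ and $y_2=l_nx_2-1$; on that route those two polynomial identities are the crux.
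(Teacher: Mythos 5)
Your proposal is correct, and it takes a genuinely different route from the paper's proof. The paper proceeds by induction on $j$: it writes $L_n(k)=l_nA_n(6k-2)-A_n(6k-8)$, unrolls the $A_n$-recurrence to express $A_n(6k-2)$ and $A_n(6k+4)$ in terms of $A_n(6k-7)$ and $A_n(6k-8)$ with explicit coefficients $w_1,w_2,z_1,z_2$ (degree up to $11$ in $n$, computed in Maple), and then verifies $z_1=l_nw_1$ and $z_2=l_nw_2-1$ by direct computation --- this is exactly the ``alternative computational route'' you sketch in your last paragraph. Your main argument instead observes that the step-$6$ subsequence $j\mapsto A_n(6j+4)$ is itself a second-order linear recurrence sequence with characteristic coefficients $\lambda^6+\mu^6$ and $(\lambda\mu)^6=1$, so everything reduces to the single identity $\lambda^6+\mu^6=(n^2+2)^3-3(n^2+2)=n^6+6n^4+9n^2+2=(na_n)^2+2=l_n$ (equivalently $\operatorname{tr}M_n^6=l_n$), which checks out, plus the two base cases $L_n(1)=A_n(10)$ and $L_n(2)=A_n(16)$ that both proofs must verify by expanding polynomials. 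Your version buys conceptual clarity: it explains \emph{why} the block length is $6$ (and, analogously, why $\lambda^{10}+\mu^{10}=r_n$ forces block length $10$ in Lemma~\ref{lemma: R is a subsequence of A}), replaces the computer-algebra verification of two pairs of high-degree polynomial identities with a one-line expansion, and makes the role of the special form $a_n=n^2+3$ transparent; the paper's version stays entirely elementary and uniform with the proof of Lemma~\ref{lemma: R is a subsequence of A} at the cost of opaque machine computation. Both are complete once the two initial terms are matched, which you correctly note must still be done by direct expansion.
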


\begin{proof}
	We use induction.
	We have that $A_n(10)=L_n(1)$, as in the proof for Lemma~\ref{lemma: R is a subsequence of A} .
	We also have $l_n=(n^3+3n)^2{+}2$, and that
	\[
	\begin{aligned}
	A_n(16)&={n}^{17}+17{n}^{15}+119{n}^{13}+441{n}^{11}+925{n}^{9}+1086{
		n}^{7}+658{n}^{5}+169{n}^{3}+11n\\
	&=L_n(2).
	\end{aligned}
	\]
	This is our base of induction.
	Assume that $A_n\big(10+6(j-1)\big)=A_n(6j+4)=L_n(j)$ for all $j=1,\ldots,k{-}1$, for some $k>2$.
	Then
	\[
	\begin{aligned}
	L_n(k)&=l_nL_n(k-1)-L_n(k-2)\\
	&=l_nA_n(6k-2)-A_n(6k-8),
	\end{aligned}
	\]
	with the first equality following definition, and the second equality following the induction hypothesis.
	In a similar way to the proof for $\big(R_n(j)\big)_{j>0}=\big(A_n(j)\big)_{j>0}$ we have that
	\[
	A_n(6k-2)=w_1A_n(6k-7)+w_2A_n(6k-8),
	\]
	where $w_1=n^5+4n^3+3n$ and $w_2=n^4+3n^2+1$.
	Also we have
	\[
	A_n\big(10+6(k-1)\big)=A_n(6k+4)=z_1A_n(6k-7)+z_2A_n(6k-8),
	\]
	where
	\[
	\begin{aligned}
	z_1&={n}^{11}+10\,{n}^{9}+36\,{n}^{7}+56\,{n}^{5}+35\,{n}^{3}+6\,n,\\
	z_2&={n}^{10}+9\,{n}^{8}+28\,{n}^{6}+35\,{n}^{4}+15\,{n}^{2}+1.
	\end{aligned}
	\]
	Now we have that
	\[
	\begin{aligned}
	L_n(k)&=l_nA_n(6k-2)-A_n(6k-8),\\
	&=l_nw_1A_n(6k-7)+(l_nw_2-1)A_n(6k-8),
	\end{aligned}
	\]
	and by direct computation we see that $z_1=l_nw_1$ and $z_2=l_nw_2-1$.
	
	So $L_n(k)=A_n\big(10+6(k-1)\big)$ and induction holds.
	This completes the proof.
\end{proof}

We prove Proposition~\ref{proposition: uniq break}.
\begin{proof}[Proof of Proposition~\ref{proposition: uniq break}]
	Given Lemmas~\ref{lemma: R is a subsequence of A} and~\ref{lemma: L is a subsequence of A} we need only show the values $L_n(5j+1)$ and $R_n(3j+1)$ align within the sequence $\big(A_n(j)\big)_{j>0}$.
	Indeed we have that
	\[
	\begin{aligned}
	L_n(5j+1)&=A_n\big(10+6(5j+1-1)\big)=A_n(30j+10),\\
	R_n(3j+1)&=A_n\big(10(3j+1)\big)=A_n(30j+10).
	\end{aligned}
	\]
	Hence the claim is proved.
\end{proof}

Theorem~\ref{theorem: uniq break} follows as a corollary.

\bibliographystyle{plain}
\bibliography{biblio_ALL}

\end{document}